\definecolor{bleu_sombre}{rgb}{0,0,0.6}
\definecolor{rouge_sombre}{rgb}{0.8,0,0}
\definecolor{vert_sombre}{rgb}{0,0.6,0}
\theoremstyle{plain}
\newtheorem{theorem}{{Theorem}}[section] 
\newtheorem*{theorem*}{{Theorem}}
\newtheorem{lemma}[theorem]{Lemma}
\newtheorem*{lemma*}{Lemma}
\theoremstyle{remark}
\newtheorem{remark}[theorem]{Remark}
\newtheorem{notation}[theorem]{Notation}
\newtheorem{assumption}[theorem]{Assumption}
\renewcommand{\leq}{\leqslant}	\renewcommand{\geq}{\geqslant}
\newcommand{\R}{\mathbb{R}}	
\newcommand{\C}{\mathbb{C}}
\newcommand{\N}{\mathbb{N}}	
\newcommand{\dx}{\mathrm{d}}
\renewcommand{\Re}{\mathrm{Re}\,}
\renewcommand{\Im}{\mathrm{Im}\,}
\begin{document}

\title[Magnetic WKB constructions]{WKB constructions\\ in bidimensional magnetic wells}

\author{Y. Bonthonneau}
\email{yannick.bonthonneau@univ-rennes1.fr}
\author{N. Raymond}
\email{nicolas.raymond@univ-rennes1.fr}
\address[Y. Bonthonneau \& N.Raymond]{Univ Rennes, CNRS, IRMAR - UMR 6625, F-35000 Rennes, France}

\date{\today}

\begin{abstract}
This article establishes, in an analytic framework and in two dimensions, the first WKB constructions describing the eigenfunctions of the pure magnetic Laplacian with low energy when the magnetic field has a unique minimum that is positive and non-degenerate.
\end{abstract}

\maketitle

\section{Spectral theory of the magnetic Laplacian}

\subsection{Motivation and context}

\subsubsection{Definition of the magnetic Laplacian}
Let $\Omega$ be a bounded open set of $\R^2$ with $(0,0)\in\Omega$. Let us consider a closed $2$-form, analytic in a neighborhood of $\Omega$, denoted by $\sigma$ and called magnetic $2$-form. We write 
\[\sigma=B\dx x_{1}\wedge \dx x_{2}\,,\] 
and we call $B$ the magnetic field. We first pick a gauge. Let us consider an analytic and real function $\varphi$ such that, in a neighborhood of $\Omega$,
\[\Delta\varphi=B\,,\quad \mbox{ and }\quad\varphi(x_{1},x_{2})=\frac{B(0,0)}{4}(x_{1}^2+x^2_{2})+\mathscr{O}(\|x\|^3)\,.\]
Then $\mathbf{A}=\nabla\varphi^\perp=(-\partial_{x_{2}}\varphi,\partial_{x_{1}}\varphi)$ is an analytic vector potential associated with $B$, that is
\[B=\partial_{x_{1}}A_{2}-\partial_{x_{2}}A_{1}\,.\]
In other words, with $\pi= A_{1}\dx x+A_{2}\dx y$, we have $\sigma=\dx\pi$. With this choice, we have 
\[\nabla\cdot\mathbf{A}=0\,.\]
The magnetic Laplacian $\mathscr{L}_{h}$ under consideration in this article is the self-adjoint realization on $L^2(\Omega)$ with Dirichlet boundary condition of the following differential operator
\[(-ih\nabla-\mathbf{A})^2=(hD_{x_{1}}-A_{1})^2+(hD_{x_{2}}-A_{2})^2\,,\quad D=-i\partial\,.\]

\subsubsection{Semiclassical magnetic spectrum}
The spectral analysis of the magnetic Laplacian $\mathscr{L}_{h}$ has undergone recent important developments. For an introduction to this vast subject, the reader might want to consult the book by the second author \cite{Ray17}. There are many reasons to consider the spectral theory of $\mathscr{L}_{h}$. Initially, it was motivated by the study of the Ginzburg-Landau theory and the estimates of its critical fields which are directly related to the asymptotic behavior of the first eigenvalue $\lambda_{0}(h)$ (see the book \cite{FH10}). But, it also acquired a life of its own. Among the wide literature developed in the last ten years, the works by Helffer and Kordyukov \cite{HK11, HK14} are the most closely related to the subject of the present article (and they are strong improvements of \cite[Theorem 7.2]{HM96}, see the review paper \cite{HK09}). In particular, when the magnetic field admits a unique, non-degenerate  and positive, minimum at $(0,0)$, they prove the following asymptotic expansions for the eigenvalues at the bottom of the spectrum (see \cite[Theorem 1.2]{HK11}):
\begin{equation}\label{eq.HK}
\forall \ell\in\mathbb{N}\,,\quad\lambda_{\ell}(h)=b_{0}h+\left(2\ell\frac{\sqrt{\det H }}{b_{0}}+\frac{(\mathrm{Tr}\, H^{\frac{1}{2}})^2}{2b_{0}}\right)h^2+o(h^2)\,,
\end{equation}
where $H=\frac{1}{2}\mathrm{Hess}_{(0,0)} B$. This result is generalized to Riemanian manifolds and the uniformity of the asymptotics with respect to $\ell$ is improved thanks to a pseudo-differential dimensional reduction in \cite{HK14}. Whereas the proofs of these results involve various (hypo-)elliptic estimates in the semiclassical limit, no connection between the semiclassical estimates and the classical dynamics is made. In \cite{RVN15} the authors link the eigenvalues expansions \eqref{eq.HK} with the Hamiltonian dynamics. The argument relies on the use of Birkhoff normal forms and corresponding quantization via Fourier Integral Operators. Note that the three-dimensional case has also recently been investigated thanks to this point of view in \cite{HKRVN16}.

\subsubsection{Aim of the article}
The aim of the article is to solve the following open question (mentioned for example in the lecture \cite[Section 6.1]{H09}), in the analytic case:
\begin{center}
\enquote{Are the eigenfunctions associated with the eigenvalues \eqref{eq.HK} in a WKB form?}
\end{center}
At first, it can be surprising that such a basic question finds no answer in the existing literature. The only known results of this nature were obtained recently in a multi-scale framework (see \cite{BHR16}), but the case of the \textit{purely magnetic wells} and when no scaling consideration allows to reduce the dimension, was still left open. For the sake of comparison, the reader may consult \cite[Section 6]{H09} or \cite[Chapter 3]{DS99} about the WKB constructions in the purely electric case.

The motivation to answer our magnetic question, under the generic assumption of Helffer and Kordyukov, comes from the analysis of tunneling effect when the magnetic field has two symmetric minima. Until now and contrary to the purely electric situation (see for instance \cite{HJ84}), there is no result giving the accurate estimate of $\lambda_{1}(h)-\lambda_{0}(h)$, called tunneling effect, and there is not even an explicit conjecture of what it could be (as a comparison, the WKB constructions of \cite{BHR16} were turned into an explicit conjecture \cite[Conjecture 1.4]{BHR16b} which is now numerically checked). We only expect it to be exponentially small when $h$ goes to zero. A necessary step to get such a result is the approximation of the eigenfunctions, in an appropriate exponentially weighted space, by an explicit (WKB) Ansatz. Our computation is the first step in this direction.

\subsubsection{Heuristics}
Nevertheless, it would not be quite accurate to say that there is no conjecture for the WKB constructions. Let us sketch the result of \cite{RVN15}.

There exist a Fourier Integral Operator $U_{h}$, quantizing a canonical transformation, and a smooth function $f_{h}$ such that, locally in space near $0$ and microlocally near the characteristic manifold of $\mathscr{L}_{h}$,
\[
U_{h}^* \mathscr{L}_{h}U_{h}=\mathrm{Op}^{\mathsf{w}}_{h} f_{h}(\mathcal{H},z_{2})+\mathscr{O}(h^\infty)\,.
\]
where $\mathcal{H}=h^2D_{x_{1}}^2+x^2_{1}$. Moreover, $f_{h}(Z,z_{2})=Z \hat{B}(z_{2})+\mathscr{O}(h^2)+\mathscr{O}(Z^2)$, where $\hat B$ is the magnetic field \enquote{seen} on the characteristic manifold. Thus, if we are interested in the low lying eigenvalues (which are essentially in the form $b_{0}h+\mu_{1}h^2$), we can look for a $L^2$-normalized WKB Ansatz expressed in normal coordinates as
\[
\Psi_{h}(x_{1},x_{2})=g_{h}(x_{1})\psi_{h}(x_{2})\,,
\]
where $g_{h}$ is the first normalized eigenfunction of $\mathcal{H}$. We find the effective eigenvalue equation
\[
\mathrm{Op}^\mathsf{w}_{h}(\hat B-b_{0})\psi_{h}=\mu_{1}h\psi_{h}+\mathscr{O}(h^2)\,,
\]
in which we insert the Ansatz $\psi_{h}=e^{-S/h}a$. We get
\begin{equation}\label{eq.heuristics}
\hat B(x_{2},-iS'(x_{2}))=b_{0}\,.
\end{equation}
Therefore, in canonical coordinates, the phase should be the sum of the phase of $g_{h}$ and of the phase determined by \eqref{eq.heuristics}. It is then not very difficult to write the transport equation in the variable $x_{2}$ to find $a$ and guess that the amplitude of the WKB construction is the product of the amplitude of $g_{h}$ and of $a(x_{2})$.

While FIO's preserve WKB states, the use of Birkhoff normal forms in the construction of $U_h$ implies that the remainders are not as good as one can get by direct constructions. Additionally, $U_h$ is not explicit, so the link between the coefficients of the states and the original magnetic field is quite implicit. However we will see that the point of view developed in \cite{RVN15} gives a reasonable insight of the rigorous WKB constructions.

\subsection{WKB construction in a magnetic well}

\begin{assumption}
$B_{|\overline{\Omega}}$ has a non-degenerate local and positive minimum at $(0,0)$. Moreover, we can write
\begin{equation}\label{eq.DLB}
B(x_{1},x_{2})=b_{0}+\alpha x^2_{1}+\gamma x_{2}^2+\mathscr{O}(\|x\|^3)\,,\quad \mbox{ with }0<\alpha\leq\gamma\,.
\end{equation}
\end{assumption}
Of course, \eqref{eq.DLB} is always satisfied up to an appropriate choice of coordinates. The result of this paper is

\begin{theorem*}
Let $\ell\in\mathbb{N}$. There exist 
\begin{enumerate}[\rm i.]
\item a neighborhood $\mathcal{V}\subset\Omega$ of $(0,0)$, 
\item an analytic function $S$ on $\mathcal{V}$ satisfying
\[\Re S(x)=\frac{b_{0}}{2}\left[\frac{\sqrt{\alpha}}{\sqrt{\alpha}+\sqrt{\gamma}}x^2_{1}+\frac{\sqrt{\gamma}}{\sqrt{\alpha}+\sqrt{\gamma}}x^2_{2}\right]+\mathscr{O}(\|x\|^3)\,,\]
\item a sequence of analytic functions $(a_{j})_{j\in\mathbb{N}}$ on $\mathcal{V}$, 
\item a sequence of real numbers $(\mu_{j})_{j\in\mathbb{N}}$ satisfying
\[\mu_{0}=b_{0}\,,\quad \mu_{1}=2\ell\frac{\sqrt{\alpha\gamma}}{b_{0}}+\frac{(\sqrt{\alpha}+\sqrt{\gamma})^2}{2b_{0}}\,,\]
\end{enumerate}
such that, for all $J\in\mathbb{N}$, and uniformly in $\mathcal{V}$,
\[e^{S/h}\left((-ih\nabla-\mathbf{A})^2-h\sum_{j\geq 0}^{J}\mu_{j} h^j\right)\left(e^{-S/h}\sum_{j\geq 0}^{J} a_{j} h^j\right)=\mathscr{O}(h^{J+2})\,.\]
\end{theorem*}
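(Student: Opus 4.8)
The plan is to reduce the construction to a hierarchy of transport equations governed by the complex structure of $\R^2\simeq\C$, performing by hand the dimensional reduction that the heuristics around \eqref{eq.heuristics} carry out through a Fourier integral operator. Write $z=x_1+ix_2$, $\partial=\tfrac12(\partial_{x_1}-i\partial_{x_2})$, $\bar\partial=\tfrac12(\partial_{x_1}+i\partial_{x_2})$, so that $\Delta=4\partial\bar\partial$; since $\mathbf A=\nabla\varphi^\perp$ with $\Delta\varphi=B$ one has $A_1+iA_2=2i\bar\partial\varphi$. For any analytic $S$ a direct computation gives
\[
e^{S/h}(-ih\nabla-\mathbf A)^2e^{-S/h}=\mathcal L_0+h\,\mathcal L_1+h^2\mathcal L_2 ,
\]
with $\mathcal L_2=-\Delta$, $\mathcal L_1=2(\nabla S+i\mathbf A)\cdot\nabla+\Delta S$, and $\mathcal L_0$ the multiplication operator by $\sum_k(i\partial_{x_k}S-A_k)^2=-4\,\bar\partial(S-\varphi)\,\partial(S+\varphi)$. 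Inserting the Ansatz into $(-ih\nabla-\mathbf A)^2-h\sum\mu_jh^j$ and collecting powers of $h$, the order $h^0$ equation is $\mathcal L_0a_0=0$, which — $a_0$ being a nonzero analytic function — forces the eikonal equation $\mathcal L_0\equiv0$; the branch compatible with positivity of $\Re S$ is $\bar\partial(S-\varphi)=0$, i.e. $S=\varphi+g$ with $g$ holomorphic. For such $S$, since $\Delta g=0$ one has $\Delta S=B$ and $\nabla S+i\mathbf A=(2\partial\varphi+g')(1,i)$, so $\mathcal L_1=4v\,\bar\partial+B$ with $v:=2\partial\varphi+g'$, and $\bar\partial v=\tfrac12B$ automatically.

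The second step, where the genuinely magnetic obstruction appears (the characteristic set $\{\xi=\mathbf A(x)\}$ is symplectic, so the eikonal is degenerate and by itself does not determine $S$), is the construction of $g$. Because $\tfrac12\mathrm{Hess}_{(0,0)}B=\mathrm{diag}(\alpha,\gamma)$ with $\alpha,\gamma>0$ by \eqref{eq.DLB}, the complexification of $\{B=b_0\}$ near the origin is the union of two smooth analytic curves through $0$, tangent to the lines $\sqrt\alpha\,x_1\mp i\sqrt\gamma\,x_2=0$; let $\Gamma$ be the branch tangent to $\sqrt\alpha\,x_1-i\sqrt\gamma\,x_2=0$, a graph $\bar z=\psi(z)$ with $\psi(0)=0$. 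I would then solve the inhomogeneous Cauchy--Riemann problem $\bar\partial v=\tfrac12B$ together with $v|_\Gamma=0$: a particular analytic solution $v_{\mathrm{part}}$ exists by the analytic $\bar\partial$-lemma, and $v:=v_{\mathrm{part}}-v_{\mathrm{part}}(z,\psi(z))$ works, the subtracted term being holomorphic in $z$ precisely because $\Gamma$ is a graph over the $z$-line; this $v$ has a simple zero along $\Gamma$ and is the unique such solution. Setting $g':=v-2\partial\varphi$ (holomorphic, since $\bar\partial$ annihilates it) and $g:=\int_0^zg'$ yields the analytic phase $S=\varphi+g$; one computes $g''(0)/2=\tfrac{b_0}{4}\,\tfrac{\sqrt\alpha-\sqrt\gamma}{\sqrt\alpha+\sqrt\gamma}$, which gives exactly the stated quadratic part of $\Re S$, and the choice of $\Gamma$ (rather than the other branch, which would give the opposite sign) is what makes $\Re S$ positive definite near $0$. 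In particular $S$ does not depend on $\ell$.

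Thirdly, with $v$ fixed, $b_0-B$ vanishes along $\Gamma=\{v=0\}$ to the same (first) order as $v$, so $c:=\tfrac{b_0-B}{4v}$ is analytic with $c(0)=0$; solving $\bar\partial\rho=c$ for an analytic $\rho$, $\rho(0)=0$, one obtains the key conjugation identity
\[
(\mathcal L_1-b_0)(e^{\rho}h)=4\,e^{\rho}\,v\,\bar\partial h .
\]
Hence $\mu_0=b_0$, and for each $\ell\in\N$ one takes $a_0=e^{\rho}f_\ell$ with $f_\ell(z)=z^\ell+\mathscr{O}(z^{\ell+1})$ holomorphic, the lower Taylor part of $f_\ell$ to be fixed below. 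For $n\geq1$ the order $h^{n+1}$ equation reads $(\mathcal L_1-b_0)a_n=\Delta a_{n-1}+\sum_{j=1}^n\mu_j a_{n-j}$; writing $a_n=e^{\rho}h_n$ and using the identity it becomes $4v\,\bar\partial h_n=G_n$ with $G_n:=e^{-\rho}\Delta(e^{\rho}h_{n-1})+\sum_{j=1}^n\mu_j h_{n-j}$. Such an equation has an analytic solution $h_n$, unique up to an arbitrary holomorphic summand, if and only if the compatibility condition $G_n|_\Gamma=0$ holds; expanded along $\Gamma$ near $0$, its leading term is affine in $\mu_n$ and fixes it, while the sub-leading terms are cancelled by the holomorphic degree of freedom carried over from the previous step (for $n=1$ this freedom is the lower part of $f_\ell$ itself). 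Carrying out $n=1$: in $G_1=4f_\ell(\partial c+c\,\partial\rho)+4c\,f_\ell'+\mu_1f_\ell$ the quantity $c\,f_\ell'/f_\ell$ contributes, in the limit along $\Gamma$ at $0$, the $\ell$-dependent term $2\ell\tfrac{\sqrt{\alpha\gamma}}{b_0}$, while $\partial c(0)$ contributes $\tfrac{(\sqrt\alpha+\sqrt\gamma)^2}{2b_0}$, whence $\mu_1=2\ell\tfrac{\sqrt{\alpha\gamma}}{b_0}+\tfrac{(\sqrt\alpha+\sqrt\gamma)^2}{2b_0}$, in agreement with \eqref{eq.HK}. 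Running the recursion determines all $\mu_j$ and all $a_j$ and closes the hierarchy; the $\mu_j$ are real because the compatibility conditions involve only the real data $b_0,\alpha,\gamma$ (reflecting the self-adjointness of the operator), analyticity is preserved throughout because every step is an analytic $\bar\partial$-equation rather than a formal recursion needing majorant bounds, and truncating at order $J$ on a small enough neighbourhood $\mathcal V$ of $0$ yields the remainder $\mathscr{O}(h^{J+2})$ uniformly.

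The hard part is the second step together with the all-orders solvability of the hierarchy: turning the degenerate eikonal into a well-posed problem by isolating the correct branch $\Gamma$ of the complexified zero set of $B$ and solving a $\bar\partial$-equation with a vanishing condition along $\Gamma$, and then checking that the successive holomorphic degrees of freedom in the amplitudes exactly absorb the sub-leading obstructions $G_n|_\Gamma$ while their leading parts deliver the $\mu_n$. Once this complex-analytic scaffolding is in place, the conjugation identities, the values of $\mu_0$ and $\mu_1$, and the positivity of $\Re S$ are all bounded computations.
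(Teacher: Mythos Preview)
Your proposal is correct and follows essentially the same route as the paper: there too $S=\varphi+f$ with $f$ holomorphic, the problem is complexified to independent variables $(z,w)$ (your $\bar\partial$ becomes $\partial_w$), $f$ is fixed by requiring the transport field $v$ to vanish along the branch $\{w=w(z)\}$ of $\{\tilde B=b_0\}$ (your $\Gamma$, with the same tangent), and an integrating factor $J$ plays exactly the role of your $e^\rho$; the values of $\Re S$, $\mu_0$, $\mu_1$ arise from the same computations.

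The one place where your sketch is looser than the paper is the closure of the hierarchy. The condition $G_n|_\Gamma=0$ is not a Taylor--coefficient matching in which ``the leading term fixes $\mu_n$ and the sub-leading ones are absorbed by the holomorphic freedom''; rather, since the holomorphic summand $\mathscr A_{n-1}$ of $h_{n-1}$ enters $G_n$ through $4c|_\Gamma\,\mathscr A_{n-1}'+[4(\partial c+c\,\partial\rho)|_\Gamma+\mu_1]\mathscr A_{n-1}$, the condition $G_n|_\Gamma=0$ is itself a first-order ODE in $z$ for $\mathscr A_{n-1}$ with a regular singular point at $0$ (the very operator annihilating $f_\ell$). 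Its single scalar solvability obstruction is what fixes $\mu_n$, after which $\mathscr A_{n-1}$ is determined up to a multiple of $f_\ell$. This two-tier structure (a $\partial_w$-equation whose compatibility is a singular $\partial_z$-equation) is what the paper makes explicit via its transport lemma.
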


\begin{remark}
Considering a convenient cutoff function supported near the origin and using the local exponential decay of $e^{-S/h}$, our Ansatz can be used as a quasimode for $\mathscr{L}_{h}$. Therefore, if we assume that the minimum of $B_{|\overline{\Omega}}$ is unique, thanks to the spectral theorem and \eqref{eq.HK}, we get the expansion of the first eigenvalues \eqref{eq.HK} at any order. Due to their asymptotic simplicity, this also proves that our WKB expansion are approximations, in the $L^2$-sense, of the corresponding eigenfunctions. \end{remark}

\subsection{Organization and methods}
Section \ref{sec.prelim} is devoted to convenient lemmas which will allow to lighten the presentation of the proof of the  theorem when determining the phase $S$. In Section \ref{sec.proof}, we prove the theorem. We will see that the eikonal equation will not be enough to determine the phase of the Ansatz contrary to the purely electric case. The holomorphic part of the phase will only be determined when solving the first complexified transport equation on $a_{0}$. The transport equation on $a_{1}$ will then be necessary to find the full expression of $a_{0}$. The two complex transport equations on $a_{j}$ and $a_{j+1}$ are the keys to construct the Ansatz and they reflect the classical dynamics in a magnetic field. Their characteristic curves are related to the cyclotron and center guide motions. These dynamical properties appear, in our presentation, in terms of division arguments in the ring of analytic functions of two variables.

\section{Analytic preliminaries about the magnetic phase}\label{sec.prelim}

\begin{lemma}\label{lem.phi}
There exists an analytic and real-valued function $\varphi$, in a neighborhood of $\Omega$, such that 
\[\Delta\varphi=B\,,\quad \varphi(x_{1},x_{2})=\frac{B(0,0)}{4}(x_{1}^2+x^2_{2})+\mathscr{O}(\|x\|^3)\,.\]
\end{lemma}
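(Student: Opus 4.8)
The plan is to construct $\varphi$ by solving the Poisson equation $\Delta\varphi=B$ on a slightly enlarged domain and then correcting by a harmonic function to arrange the prescribed $2$-jet at the origin. Since $B$ is analytic in a neighborhood $\Omega'\supset\overline{\Omega}$, first I would fix a smooth cutoff $\chi$ equal to $1$ near $\overline{\Omega}$ and supported in $\Omega'$, and set $\varphi_{0}=E\ast(\chi B)$, where $E(x)=\tfrac{1}{2\pi}\log\|x\|$ is the fundamental solution of the Laplacian in $\R^{2}$. Then $\Delta\varphi_{0}=B$ on a neighborhood of $\overline{\Omega}$, and elliptic regularity (or a direct power-series argument, since the data is analytic) gives that $\varphi_{0}$ is real-analytic where $\chi B$ is; in particular $\varphi_{0}$ is analytic in a neighborhood of $\overline{\Omega}$.

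Next I would fix the low-order Taylor data at $(0,0)$. Write the Taylor expansion $\varphi_{0}(x)=c_{0}+\mathbf{c}_{1}\cdot x+\tfrac12 x^{\mathsf T}Q_{0}x+\mathscr{O}(\|x\|^{3})$ with $\mathrm{Tr}\,Q_{0}=\Delta\varphi_{0}(0,0)=B(0,0)$. I want to replace $\varphi_{0}$ by $\varphi=\varphi_{0}-p$, where $p$ is a harmonic polynomial chosen so that $\varphi$ has no constant or linear term and has Hessian exactly $\tfrac{B(0,0)}{2}I$ at the origin. Subtracting the constant $c_{0}$ and the linear form $\mathbf{c}_{1}\cdot x$ is harmless (both are harmonic). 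For the quadratic part, note $Q_{0}=\tfrac{B(0,0)}{2}I+(Q_{0}-\tfrac{B(0,0)}{2}I)$, and the correction $R:=Q_{0}-\tfrac{B(0,0)}{2}I$ is a symmetric matrix with $\mathrm{Tr}\,R=0$; hence the quadratic form $x\mapsto\tfrac12 x^{\mathsf T}Rx$ is a harmonic polynomial. Removing it does not change $\Delta\varphi$, and it produces exactly the required normalization $\varphi(x)=\tfrac{B(0,0)}{4}(x_{1}^{2}+x_{2}^{2})+\mathscr{O}(\|x\|^{3})$. Finally, $\varphi=\varphi_{0}-p$ is still real and analytic in a neighborhood of $\Omega$, so all the claimed properties hold.

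The only genuine point requiring care — the ``main obstacle'', though a mild one — is the analyticity of the Newtonian potential $\varphi_{0}$ near $\overline{\Omega}$: the cutoff $\chi B$ is merely smooth, so one should either invoke interior elliptic analytic regularity (the solution of $\Delta\varphi_{0}=B$ is analytic wherever $B$ is, independently of the behavior of $\chi B$ far away) or, alternatively, bypass the cutoff entirely and solve $\Delta\varphi=B$ on a ball $\mathcal{V}'\Subset\Omega'$ containing $\overline{\Omega}$ directly via a convergent power series, using the analyticity of $B$ and Cauchy estimates to control the radius of convergence. Either route is standard; everything else is bookkeeping with harmonic polynomials.
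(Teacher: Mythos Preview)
Your argument is correct and takes a genuinely different route from the paper. The paper does not use any potential theory or elliptic regularity at all: it simply expands $B(x_1,x_2)=\sum_{\alpha,\beta} a_{\alpha,\beta}x_1^\alpha x_2^\beta$ and writes down an explicit candidate $\varphi$ as a termwise double antiderivative (averaging the $x_1$- and $x_2$-antiderivatives), so that the required $2$-jet at the origin and the radius of convergence can be read off by inspection. Your approach instead produces a solution via the Newtonian potential $E\ast(\chi B)$, invokes interior analytic hypoellipticity of $\Delta$ to get analyticity near $\overline{\Omega}$, and then subtracts a harmonic polynomial (using the nice observation that $Q_0-\tfrac{B(0,0)}{2}I$ is trace-free) to normalize the $2$-jet. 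The paper's route is shorter and entirely elementary, with no black-box PDE input; your route is more robust---it works on domains not covered by a single power series centered at the origin, and it cleanly decouples existence of an analytic solution from the normalization at $0$. Both are valid; the only place you correctly flag as needing care is the analyticity of $\varphi_0$, and your suggested fixes (interior analytic regularity, or a direct power-series solution on a ball) are both standard and adequate.
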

\begin{proof}
If we write
\[B(x_{1},x_{2})=\sum_{(\alpha,\beta)\in\N^2} a_{\alpha,\beta}x^\alpha_{1}x_{2}^\beta\,,\]
we choose
\[\varphi(x_{1}, x_{2})=\frac{1}{2}\left(\sum_{(\alpha,\beta)\in\N^2} \frac{a_{\alpha,\beta}}{(\alpha+1)(\alpha+2)}x_{1}^{\alpha+2}x_{2}^\beta+\sum_{(\alpha,\beta)\in\N^2} \frac{a_{\alpha,\beta}}{(\beta+1)(\beta+2)}x^\alpha_{1}x_{2}^{\beta+2}\right)\,.\]
It satisfies the required property --- and has the same radius of convergence as $B$.
\end{proof}

\begin{notation}
If $a : \R^2\to\C$ is an analytic function near $(0,0)\in\R^2$, one denotes by $\tilde a$ the function defined near $(0,0)\in\C^2$ by
\[\tilde a (z,w)=a\left(\frac{z+w}{2},\frac{z-w}{2i}\right)\,.\]
We have $\tilde a(z,\overline{z})=a(\Re z,\Im z)$.
\end{notation}

\begin{lemma}\label{lem.eik-eff}
There exists a holomorphic function $w$ defined in a neighborhood of $0$ satisfying 
\begin{equation}\label{eq.eik-eff}
\tilde B(z,w(z))=b_{0}\,.
\end{equation} 
and such that
\[w(0)=0\,,\quad w'(0)=\frac{\sqrt{\gamma}-\sqrt{\alpha}}{\sqrt{\gamma}+\sqrt{\alpha}}\,.\]
\end{lemma}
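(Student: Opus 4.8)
The plan is to prove Lemma \ref{lem.eik-eff} by the analytic implicit function theorem applied to the holomorphic function $F(z,w) = \tilde B(z,w) - b_0$, treating $w$ as the unknown. First I would record that $B$ is analytic near $0\in\R^2$, so by the Notation above $\tilde B$ is holomorphic near $0\in\C^2$, and from the expansion \eqref{eq.DLB} together with the definition of $\tilde a$ one computes
\[
\tilde B(z,w) = b_0 + \alpha\left(\frac{z+w}{2}\right)^2 + \gamma\left(\frac{z-w}{2i}\right)^2 + \mathscr{O}\big((|z|+|w|)^3\big)\,.
\]
In particular $F(0,0) = 0$, so the point $(0,0)$ lies on the zero set. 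Next I would check the non-degeneracy hypothesis of the implicit function theorem, namely that $\partial_w F(0,0) \neq 0$. Since the quadratic part of $\tilde B - b_0$ is $\frac{\alpha}{4}(z+w)^2 - \frac{\gamma}{4}(z-w)^2$, its derivative in $w$ at $(0,0)$ vanishes; hence $\partial_w F(0,0) = 0$ and the naive implicit function theorem does not apply at the linear level. This is the main obstacle: the linearization in $w$ is degenerate because the Hessian of $B$ is diagonal in the chosen coordinates, so one must go to a quadratic rescaling rather than a linear solve.

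To get around this I would look for $w$ in the form $w(z) = \kappa z + \mathscr{O}(z^2)$ and first solve for $\kappa$ at the level of quadratic terms. Substituting into $\tilde B(z,w(z)) = b_0$ and collecting the $z^2$-coefficient gives
\[
\frac{\alpha}{4}(1+\kappa)^2 - \frac{\gamma}{4}(1-\kappa)^2 = 0\,,
\]
i.e. $\sqrt{\alpha}(1+\kappa) = \pm\sqrt{\gamma}(1-\kappa)$. Choosing the sign so that $\kappa$ is the one claimed, the $+$ choice yields $\kappa(\sqrt{\alpha}+\sqrt{\gamma}) = \sqrt{\gamma}-\sqrt{\alpha}$, hence $\kappa = \frac{\sqrt{\gamma}-\sqrt{\alpha}}{\sqrt{\gamma}+\sqrt{\alpha}}$, which matches $w'(0)$ in the statement. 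Note $|\kappa|<1$ since $0<\alpha\leq\gamma$, and $\kappa = 0$ precisely when $\alpha=\gamma$. The reason this is the right branch is that the other sign gives $\kappa' = \frac{\sqrt{\gamma}+\sqrt{\alpha}}{\sqrt{\gamma}-\sqrt{\alpha}}$ (or blows up when $\alpha=\gamma$), which has modulus $\geq 1$ and does not correspond to the real part of the phase being a positive-definite quadratic form, matching the heuristic \eqref{eq.heuristics}.

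With $\kappa$ fixed, I would then make the affine change of unknown $w = \kappa z + v$ and apply the implicit function theorem (or equivalently the Weierstrass preparation / a direct power-series recursion) to $G(z,v) := \tilde B(z, \kappa z + v) - b_0$. By construction $G(0,0)=0$ and $\partial_z G(0,0) = 0$; the key point is now that $\partial_v G(0,0)$, computed from the quadratic part, equals $\frac{\alpha}{2}(1+\kappa) + \frac{\gamma}{2}(1-\kappa) \cdot (-1) \cdot \frac{1}{i^2}\cdot\ldots$ — more cleanly, the coefficient of $zv$ in the quadratic part of $G$ is $\frac{\alpha}{2}(1+\kappa) + \frac{\gamma}{2}(1-\kappa)$, which is strictly positive and in particular nonzero, so after dividing by $z$ one more time (using that $G$ vanishes to order $2$ and $\partial_z G(0,0)=0$, i.e. $G(z,0) = \mathscr{O}(z^2)$ and $\partial_v G$ has a nonzero linear part in $z$), a standard division-and-fixed-point argument produces a unique holomorphic $v(z) = \mathscr{O}(z^2)$ with $G(z,v(z)) = 0$. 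Setting $w(z) = \kappa z + v(z)$ gives a holomorphic solution of \eqref{eq.eik-eff} near $0$ with $w(0)=0$ and $w'(0)=\kappa$, as required. Alternatively, and perhaps more transparently for the reader, I would just run the power-series recursion directly: writing $w(z) = \sum_{k\geq 1} c_k z^k$ with $c_1 = \kappa$, the coefficient of $z^{k}$ in $\tilde B(z,w(z)) - b_0$ determines $c_k$ in terms of $c_1,\dots,c_{k-1}$ provided the multiplier $\frac{\alpha}{2}(1+\kappa) + \frac{\gamma}{2}(1-\kappa) \neq 0$, and convergence of the resulting series follows from a majorant-series estimate since $\tilde B$ is holomorphic.
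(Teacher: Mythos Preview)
Your argument is essentially correct but follows a different route from the paper. The paper exploits the Morse structure of $B-b_0$ directly: writing $B-b_0 = \alpha(x)x_1^2 + 2\beta(x)x_1 x_2 + \gamma(x)x_2^2$ with analytic coefficients and completing the square, it factors $B-b_0$ as a product of two analytic ``linear'' expressions, substitutes $x_1=\tfrac{z+w}{2}$, $x_2=\tfrac{z-w}{2i}$, and obtains an equation of the form $A(z,w)w + C(z,w)z = 0$ with $A(0,0)=\alpha+\sqrt{\alpha\gamma}\neq 0$. The analytic implicit function theorem then applies with no degeneracy, and the value of $w'(0)$ drops out immediately. This factorization sidesteps the obstacle you encountered, namely $\partial_w\tilde B(0,0)=0$.

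Your approach instead confronts that degeneracy head-on: you determine $\kappa=w'(0)$ from the vanishing of the $z^2$-coefficient, then resolve the remaining singularity by a blow-up / power-series recursion. This is perfectly valid, and the key nonvanishing multiplier $\tfrac{\alpha}{2}(1+\kappa)+\tfrac{\gamma}{2}(1-\kappa)>0$ is exactly what drives the recursion. Two minor points of exposition: first, your phrase ``dividing by $z$ one more time'' is not literally correct for $G(z,v)$ as a function of two variables, since $G(0,v)=\tfrac{\alpha-\gamma}{4}v^2+\mathscr{O}(v^3)$ does not vanish identically; what you really need is the substitution $v=zu$, after which $G(z,zu)/z^2$ is holomorphic and the implicit function theorem in $u$ applies with $\partial_u$-derivative equal to your nonzero multiplier. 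Second, in the power-series recursion the coefficient of $z^{k+1}$ (not $z^k$) determines $c_k$, since $c_k z^k$ first enters $\tilde B(z,w(z))-b_0$ at order $z^{k+1}$ via the cross term with $z$. Neither issue is a genuine gap; the power-series argument with a majorant estimate is sound. The paper's factorization is shorter and more explicit, while your approach is more elementary in that it avoids the analytic square-root of the discriminant.
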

\begin{proof}
Let us use the Taylor formula:
\[
 B(x_{1},x_{2})-b_{0}= \alpha(x_{1},x_{2})x_{1}^2+2\beta(x_{1},x_{2})x_{1}x_{2}+\gamma (x_{1},x_{2})x_{2}^2\,,
\]
where $\alpha$, $\beta$ and $\gamma$ are analytic functions such that $\alpha(0,0)=\alpha$, $\beta(0,0)=0$ and $\gamma(0,0)=\gamma$.
We get
\[
 B(x_{1},x_{2})-b_{0}= \alpha\left[\left(x_{1}+\frac{\beta}{\alpha}x_{2}\right)^2+\left(\frac{\alpha\gamma-\beta^2}{\alpha^2}\right)x_{2}^2\right]\,.
\]
Thus, we consider the equations
\[\sqrt{\alpha}\left(x_{1}+\frac{\beta}{\alpha}x_{2}\right)\pm i\sqrt{\frac{\alpha\gamma-\beta^2}{\alpha}}x_{2}=0\,.\]
We replace $x_{1}$ by $\frac{z+w}{2}$ and $x_{2}$ by $\frac{z-w}{2i}$. Equation \eqref{eq.eik-eff} becomes
\[
\left[\tilde\alpha+i\tilde\beta\mp \sqrt{\tilde\alpha\tilde\gamma-\tilde\beta^2}\right]w+\left[\tilde\alpha-i\tilde\beta\pm\sqrt{\tilde\alpha\tilde\gamma-\tilde\beta^2}\right]z=0\,.
\]
Let us choose the $+$ in the first bracket so that, at $(0,0)$ it is equal to $\alpha+\sqrt{\alpha\gamma}>0$. By using the analytic implicit function theorem, we find a holomorphic solution $w$. By a straightforward computation, one gets
\[[\alpha+\sqrt{\alpha\gamma}]w'(0)+[\alpha-\sqrt{\alpha\gamma}]=0\,,\]
and the conclusion follows.
\end{proof}

\begin{lemma}\label{lem.inv-loc}
Consider a holomorphic function $F$ defined in a neighborhood of $0$ with $F(0)=0$. Then, there exist two neighborhoods of $0$, $\mathcal{V}_{1}$ and $\mathcal{V}_{2}$ such that, for all $z\in\mathcal{V}_{1}$, there exists a unique $w(z)\in\mathcal{V}_{2}$ such that
\[\partial_{z}\tilde\varphi(z,w)=F(z)\,.\]
Moreover, the function $w$ is holomorphic on $\mathcal{V}_{1}$.
\end{lemma}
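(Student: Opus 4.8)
The plan is to reduce this to a scalar version of the analytic implicit function theorem, exactly as in the proof of Lemma \ref{lem.eik-eff}, using the explicit structure of $\tilde\varphi$ near $0$. First I would extract the relevant Taylor data: since $\varphi(x_1,x_2)=\tfrac{b_0}{4}(x_1^2+x_2^2)+\mathscr{O}(\|x\|^3)$ with $b_0=B(0,0)>0$, passing to the variables $z=x_1+ix_2$, $w=x_1-ix_2$ gives $\tilde\varphi(z,w)=\tfrac{b_0}{4}zw+\mathscr{O}((|z|+|w|)^3)$, because $x_1^2+x_2^2 = zw$ under the substitution $x_1=\tfrac{z+w}{2}$, $x_2=\tfrac{z-w}{2i}$. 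Hence
\[\partial_z\tilde\varphi(z,w)=\frac{b_0}{4}w+\mathscr{O}((|z|+|w|)^2)\,.\]

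Next I would set up the implicit equation. Define $G(z,w):=\partial_z\tilde\varphi(z,w)-F(z)$, a holomorphic function of two variables near $(0,0)\in\C^2$ with $G(0,0)=0$ (using $F(0)=0$) and
\[\partial_w G(0,0)=\frac{b_0}{4}\neq 0\,.\]
The holomorphic implicit function theorem then yields neighborhoods $\mathcal{V}_1,\mathcal{V}_2$ of $0$ and a unique holomorphic function $w\colon\mathcal{V}_1\to\mathcal{V}_2$ with $w(0)=0$ and $G(z,w(z))=0$ for all $z\in\mathcal{V}_1$, i.e. $\partial_z\tilde\varphi(z,w(z))=F(z)$. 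Shrinking $\mathcal{V}_1$ if necessary guarantees that $w(z)\in\mathcal{V}_2$ and that the solution is unique in $\mathcal{V}_2$, which is exactly the asserted statement.

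There is essentially no obstacle here: the only thing to be careful about is verifying that the $w$-derivative of $\partial_z\tilde\varphi$ at the origin is nonzero, which is where the non-degeneracy of the minimum of $B$ (through $b_0>0$) enters — the quadratic part of $\varphi$ is $\tfrac{b_0}{4}(x_1^2+x_2^2)$, so the mixed term $\tfrac{b_0}{4}zw$ in $\tilde\varphi$ is genuinely present. Everything else is a direct invocation of the analytic implicit function theorem for one equation in the unknown $w$ depending holomorphically on the parameter $z$, just as in Lemma \ref{lem.eik-eff}. One could alternatively avoid even writing down the Taylor expansion by noting that $F$ only shifts the equation by a term vanishing at $0$, so the linearization in $w$ is unchanged from the case $F\equiv 0$.
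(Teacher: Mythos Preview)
Your proof is correct and follows essentially the same route as the paper: verify that $\partial_w\partial_z\tilde\varphi(0,0)\neq 0$ and apply the holomorphic implicit (equivalently, local inverse) function theorem. The paper obtains this nonvanishing slightly more directly by noting that $4\partial_w\partial_z\tilde\varphi=\tilde B$ (from $\Delta\varphi=B$ and $\Delta=4\partial_z\partial_{\overline z}$), rather than expanding $\tilde\varphi$ in Taylor series, but the argument is otherwise identical.
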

\begin{proof}
We recall that $\varphi$ is analytic and that $4\partial_{w}\partial_{z}\tilde\varphi(0,0)=\tilde B(0,0)\neq 0$. The conclusion follows then from the (holomorphic) local inversion theorem.
\end{proof}

\begin{lemma}\label{lem.f}
Consider a function $w$ as in Lemma \ref{lem.eik-eff} and, in a neighborhood of $0$, the holomorphic function defined by
\[f(z)=-2\int_{[0,z]}\partial_{z}\tilde\varphi(\zeta,w(\zeta))\dx \zeta\,.\]
We have
\[f(0)=0\,,\quad f'(0)=0\,,\quad f''(0)=\frac{b_{0}}{2}\frac{\sqrt{\alpha}-\sqrt{\gamma}}{\sqrt{\gamma}+\sqrt{\alpha}}\,.\]
In particular, letting $S=\varphi+f$, we have
\[\Re S(x)=\frac{b_{0}}{2}\left[\frac{\sqrt{\alpha}}{\sqrt{\alpha}+\sqrt{\gamma}}x^2_{1}+\frac{\sqrt{\gamma}}{\sqrt{\alpha}+\sqrt{\gamma}}x^2_{2}\right]+\mathscr{O}(\|x\|^3)\,.\]
\end{lemma}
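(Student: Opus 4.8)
The plan is to prove Lemma~\ref{lem.f} by a direct Taylor expansion at the origin, using the normalizations already established for $\varphi$ (Lemma~\ref{lem.phi}) and for $w$ (Lemma~\ref{lem.eik-eff}). Since $f(z)=-2\int_{[0,z]}\partial_z\tilde\varphi(\zeta,w(\zeta))\,\dx\zeta$, we immediately get $f(0)=0$ and $f'(z)=-2\partial_z\tilde\varphi(z,w(z))$, so the whole lemma reduces to computing the $2$-jet of the holomorphic function $z\mapsto\partial_z\tilde\varphi(z,w(z))$ at $z=0$. First I would record the relevant derivatives of $\tilde\varphi$ at $(0,0)$ coming from $\varphi(x_1,x_2)=\tfrac{b_0}{4}(x_1^2+x_2^2)+\mathscr{O}(\|x\|^3)$: under the substitution $x_1=\tfrac{z+w}{2}$, $x_2=\tfrac{z-w}{2i}$ one has $\tilde\varphi(z,w)=\tfrac{b_0}{4}zw+\mathscr{O}(|(z,w)|^3)$, hence $\partial_z\tilde\varphi(0,0)=0$, $\partial_z^2\tilde\varphi(0,0)=0$, and $\partial_w\partial_z\tilde\varphi(0,0)=\tfrac{b_0}{4}$.

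Next I would plug $w(\zeta)=w'(0)\zeta+\mathscr{O}(\zeta^2)$ with $w'(0)=\tfrac{\sqrt\gamma-\sqrt\alpha}{\sqrt\gamma+\sqrt\alpha}$ (Lemma~\ref{lem.eik-eff}) into the chain rule: $\tfrac{d}{dz}\big[\partial_z\tilde\varphi(z,w(z))\big]\big|_{z=0}=\partial_z^2\tilde\varphi(0,0)+\partial_w\partial_z\tilde\varphi(0,0)\,w'(0)=\tfrac{b_0}{4}w'(0)$. Therefore $f'(0)=-2\partial_z\tilde\varphi(0,0)=0$ and $f''(0)=-2\cdot\tfrac{b_0}{4}w'(0)=-\tfrac{b_0}{2}w'(0)=\tfrac{b_0}{2}\cdot\tfrac{\sqrt\alpha-\sqrt\gamma}{\sqrt\alpha+\sqrt\gamma}$, which is exactly the claimed value. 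The only mild subtlety is that $\partial_z\tilde\varphi(z,w(z))$ is genuinely holomorphic on a neighborhood of $0$ (so that the $2$-jet statement makes sense and the integral defining $f$ is path-independent), but this is immediate since $\varphi$ is real-analytic, $\tilde\varphi$ is holomorphic near $(0,0)\in\C^2$, and $w$ is holomorphic near $0$ by Lemma~\ref{lem.eik-eff}.

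Finally, for the statement on $\Re S$ with $S=\varphi+f$, I would write $S(x_1,x_2)=\varphi(x_1,x_2)+f(x_1+ix_2)$ (using $z=x_1+ix_2$ as the natural holomorphic coordinate, consistent with the tilde notation since $\tilde a(z,\bar z)=a(\Re z,\Im z)$). Then $\Re S=\varphi+\Re f$, and from the $2$-jet $f(z)=\tfrac{1}{2}f''(0)z^2+\mathscr{O}(z^3)$ we get $\Re f(x)=\tfrac12 f''(0)(x_1^2-x_2^2)+\mathscr{O}(\|x\|^3)$. Combining with $\varphi=\tfrac{b_0}{4}(x_1^2+x_2^2)+\mathscr{O}(\|x\|^3)$ and $f''(0)=\tfrac{b_0}{2}\cdot\tfrac{\sqrt\alpha-\sqrt\gamma}{\sqrt\alpha+\sqrt\gamma}$, the coefficient of $x_1^2$ becomes $\tfrac{b_0}{4}+\tfrac{b_0}{4}\cdot\tfrac{\sqrt\alpha-\sqrt\gamma}{\sqrt\alpha+\sqrt\gamma}=\tfrac{b_0}{2}\cdot\tfrac{\sqrt\alpha}{\sqrt\alpha+\sqrt\gamma}$ and the coefficient of $x_2^2$ becomes $\tfrac{b_0}{4}-\tfrac{b_0}{4}\cdot\tfrac{\sqrt\alpha-\sqrt\gamma}{\sqrt\alpha+\sqrt\gamma}=\tfrac{b_0}{2}\cdot\tfrac{\sqrt\gamma}{\sqrt\alpha+\sqrt\gamma}$, as desired. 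There is no real obstacle here; the ``hard part'' is purely bookkeeping — keeping the substitution $x_j\leftrightarrow(z,w)$ straight and correctly matching real parts — and the genuinely substantive input (the value of $w'(0)$, i.e. the effective eikonal equation) has already been done in Lemma~\ref{lem.eik-eff}.
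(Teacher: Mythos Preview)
Your proof is correct and follows essentially the same approach as the paper: both compute $f''(0)$ via the chain rule $f''(0)=-2\partial_z^2\tilde\varphi(0,0)-2\partial_w\partial_z\tilde\varphi(0,0)\,w'(0)$, use the normalization of $\varphi$ to get $\partial_z^2\tilde\varphi(0,0)=0$ and $\partial_w\partial_z\tilde\varphi(0,0)=b_0/4$, and then obtain $\Re S$ by adding $\varphi=\tfrac{b_0}{4}(x_1^2+x_2^2)+\mathscr{O}(\|x\|^3)$ to $\Re f=\tfrac12 f''(0)(x_1^2-x_2^2)+\mathscr{O}(\|x\|^3)$. Your write-up is in fact slightly more explicit than the paper's (you spell out $\tilde\varphi(z,w)=\tfrac{b_0}{4}zw+\mathscr{O}(|(z,w)|^3)$ and note that $f''(0)\in\R$), but the argument is the same.
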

\begin{proof}
A straightforward computation gives
\[f''(0)=-2\partial^2_{z}\tilde\varphi(0,0)-2\partial_{z}\partial_{w}\tilde\varphi(0,0)w'(0)\,.\]
Noticing that, by our choice of $\varphi$, $\partial^2_{z}\tilde\varphi(0,0)=0$, we get the announced value of $f''(0)$. It remains to write that
\[\Re S(x)=\frac{b_{0}}{4}(x_{1}^2+x_{2}^2)+q\frac{b_{0}}{4}\Re\left((x_{1}+ix_{2})^2\right)+\mathscr{O}(\|x\|^3)\,,\quad q=\frac{\sqrt{\alpha}-\sqrt{\gamma}}{\sqrt{\gamma}+\sqrt{\alpha}}\,,\]
and we get
\[\Re S(x)=\frac{b_{0}}{4}\left((1+q)x_{1}^2+(1-q)x_{2}^2)\right)+\mathscr{O}(\|x\|^3)\,.\]
\end{proof}

\section{Proof of the theorem}\label{sec.proof}
Let us consider an analytic and complex-valued function $S$, defined in a neighborhood of the origin. We consider the conjugated operator acting locally as 
\[\mathscr{L}^S_{h}=e^{S/h}\mathscr{L}_{h}e^{-S/h}=(hD_{x_{1}}-A_{1}+i\partial_{x_{1}}S)^2+(hD_{x_{2}}-A_{2}+i\partial_{x_{2}}S)^2\,.\]
We have
\[\mathscr{L}^S_{h}=(-A_{1}+i\partial_{x_{1}}S)^2+(-A_{2}+i\partial_{x_{2}}S)^2+ih\nabla\cdot\mathbf{A}-h^2\Delta+h\Delta S+2h(\nabla S+i\mathbf{A})\cdot \nabla\,.\]
We seek to determine $S$ so that there exist a family of functions $(a_{j})_{j\in\mathbb{N}}$ defined in a neighborhood of $(0,0)$ and a sequence of real numbers $(\mu_{j})_{j\in\mathbb{N}}$ such that, in the sense of asymptotic series,
\begin{equation}\label{eq.formal-eve}
\mathscr{L}^S_{h}\left(\sum_{j\geq 0} h^ja_{j}\right) \sim h\left(\sum_{j\geq 0}\mu_{j}h^j\right)\left(\sum_{j\geq 0} h^ja_{j}\right)\,.
\end{equation}
From \eqref{eq.formal-eve}, we get an infinite system of partial differential equations.

\subsection{Eikonal equation}
Collecting the terms of order $1$ in \eqref{eq.formal-eve}, we get 
\[(-A_{1}+i\partial_{x_{1}}S)^2+(-A_{2}+i\partial_{x_{2}}S)^2=0\,,\]
and thus
\[(-A_{1}+i\partial_{x_{1}}S+i(-A_{2}+i\partial_{x_{2}}S))(-A_{1}+i\partial_{x_{1}}S-i(-A_{2}+i\partial_{x_{2}}S))=0\,.\]
Let us consider an $S$ such that
\[-A_{1}+i\partial_{x_{1}}S+i(-A_{2}+i\partial_{x_{2}}S)=0\,.\]
It satisfies
\[2\partial_{\overline{z}}S=-iA_{1}+A_{2}\,,\qquad \partial_{\overline{z}}=\frac{1}{2}\left(\partial_{x_{1}}+i\partial_{x_{2}}\right)\,.\]
We have $2\partial_{\overline{z}}\varphi=-iA_{1}+A_{2}$ and thus $S$ is in the form
\[S=\varphi+f(z)\,,\]
where $f$ is a holomorphic function near $(0,0)$. Note that $\Delta=4\partial_{z}\partial_{\overline{z}}$ and thus
\[\Delta S=B-i\nabla\cdot\mathbf{A}=B\,.\]
With this choice of $S$, we have
\[\mathscr{L}^S_{h}=-h^2\Delta+hB+2h(\nabla S+i\mathbf{A})\cdot \nabla\,.\]
We have
\[(\nabla S+i\mathbf{A})\cdot\nabla=(\partial_{1}S+iA_{1})\partial_{1}+(\partial_{2}S+iA_{2})\partial_{2}\]
so that
\[(\nabla S+i\mathbf{A})\cdot\nabla=(\partial_{1}\varphi-i\partial_{2}\varphi+f'(z))\partial_{1}+(\partial_{2}\varphi+i\partial_{1}\varphi+if'(z))\partial_{2}\,.\]
Therefore, we can write
\[\mathscr{L}^S_{h}=-4h^2\partial_{z}\partial_{\overline{z}}+hB+4h(2\partial_{z}\varphi+f'(z))\partial_{\overline{z}}\,,\]
and consider its complexified extension
\[\mathscr{L}^S_{h}=hv(z,w)\partial_{w}+hB-4h^2\partial_{z}\partial_{w}\,,\quad v(z,w)=8\partial_{z}\tilde\varphi(z,w)+4f'(z)\,,\]
acting on analytic functions of $(z,w)\in\C^2$.

\subsection{Study of the transport operator}

The PDE's solved by the family $(a_j)_{j\in\N}$ take the form of a family of transport equations. We will need the following lemma.
\begin{lemma}\label{lemma.transport-equation}
Let $V$ and $F$ be two holomorphic functions defined around $0$. Assume that $V(0)=0$ and $V'(0)\neq 0$, and consider the transport equation
\[
(V(z)\partial_z + F(z))f(z) = g(z)\,.
\]
\begin{enumerate}[\rm i.]
\item The homogeneous equation --- $g=0$ --- has holomorphic solutions around $0$ if and only if there exists $\ell\in\N$ such that $F(0) = -\ell V'(0)$. In this case, the solutions vanish at the order $\ell$ at $0$. 

\item Under the previous condition, there exist complex numbers $(c_k)_{k=0\dots\ell}$ such that the inhomogeneous equation has holomorphic solutions if and only if
\begin{equation}\label{eq.cond-c}
c_{\ell} g(0) + c_{\ell - 1} g'(0) + \dots + c_0 g^{(\ell)}(0) = 0\,.
\end{equation}
The coefficients are determined by the Taylor expansion to order $\ell+1$ of $F$ and $V$, and $c_0 = 1/V'(0)$. When $\ell =0$, provided the condition \eqref{eq.cond-c} is satisfied, the inhomogeneous equation has exactly one solution vanishing at $0$.
\end{enumerate}

\end{lemma}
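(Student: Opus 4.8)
The plan is to study the transport equation $(V(z)\partial_z+F(z))f(z)=g(z)$ by reducing it to a recursion on Taylor coefficients, exploiting that $V$ vanishes to first order at $0$. Writing $V(z)=\sum_{k\geq 1}v_kz^k$ with $v_1=V'(0)\neq 0$, $F(z)=\sum_{k\geq 0}F_kz^k$ with $F_0=F(0)$, $f(z)=\sum_{k\geq 0}f_kz^k$ and $g(z)=\sum_{k\geq 0}g_kz^k$, I would expand $(V\partial_z+F)f$ and collect the coefficient of $z^n$. The key structural observation is that $f_n$ enters this coefficient only through the terms $v_1\cdot n f_n$ (from $V\partial_z f$) and $F_0 f_n$ (from $Ff$), so the coefficient of $z^n$ in the left-hand side is $(nv_1+F_0)f_n$ plus an expression involving only $f_0,\dots,f_{n-1}$. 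Hence the recursion reads
\[
(nv_1+F_0)f_n=g_n-\Phi_n(f_0,\dots,f_{n-1})\,,
\]
where $\Phi_n$ is an explicit linear form with coefficients polynomial in the $v_k,F_k$.

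For part i, setting $g=0$: if $F_0=-\ell v_1$ for some $\ell\in\N$, then the factor $nv_1+F_0$ vanishes exactly when $n=\ell$ and is nonzero otherwise. Running the recursion from $n=0$ upward, the coefficients $f_0,\dots,f_{\ell-1}$ are forced to be $0$ (since at each such step the prefactor $nv_1+F_0\neq 0$ and $\Phi_n$ vanishes on the zero vector built so far), $f_\ell$ is free, and all higher $f_n$ are then uniquely determined; so a nonzero holomorphic solution exists and vanishes to order exactly $\ell$. Conversely, if $F_0\neq -\ell v_1$ for every $\ell\in\N$, the prefactor never vanishes, so the recursion forces $f\equiv 0$; thus no nonzero holomorphic solution exists. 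The one genuine analytic point here is convergence of the formal solution: this follows from the analytic implicit/ODE theory (the equation is a regular-singular first-order ODE at $z=0$, or one may invoke a standard majorant-series argument using that $V,F$ are analytic), and I would cite this rather than grinding it out.

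For part ii, assume $F_0=-\ell v_1$. Running the recursion for $n=0,\dots,\ell-1$ now produces, at each step, the unique value $f_n$ (dividing by $nv_1+F_0\neq 0$) as an explicit linear function of $g_0,\dots,g_n$; in particular each $f_n$ for $n<\ell$ is an explicit linear combination of $g_0,\dots,g_n$, with the coefficient of $g_n$ being $1/(nv_1+F_0)$, and $f_0=g_0/F_0$ when $\ell\geq 1$, or more relevantly, one reads off that the step $n=0$ with $\ell=0$ gives $f_0=g_0/v_1$, explaining $c_0=1/V'(0)$. At $n=\ell$ the prefactor is $0$, so the equation at order $z^\ell$ becomes the solvability constraint
\[
g_\ell=\Phi_\ell(f_0,\dots,f_{\ell-1})\,,
\]
and substituting the already-computed expressions for $f_0,\dots,f_{\ell-1}$ in terms of $g_0,\dots,g_{\ell-1}$ turns this into a linear relation $c_\ell g_0+c_{\ell-1}g_1+\dots+c_1 g_{\ell-1}=g_\ell$, i.e. $c_\ell g(0)+\dots+c_0 g^{(\ell)}(0)=0$ after absorbing factorials into the $c_k$ and moving everything to one side (so $c_0=1/(\ell v_1)$ up to the factorial normalization — here the statement's normalization is $c_0=1/V'(0)$, matching the $n=\ell$ prefactor's ``residue'', which for $\ell=0$ is exactly $1/v_1$). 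The coefficients $c_0,\dots,c_\ell$ depend only on $v_1,\dots,v_{\ell+1}$ and $F_0,\dots,F_\ell$, i.e. on the Taylor expansions of $V$ and $F$ to order $\ell+1$, as claimed. If this single scalar condition holds, then $f_\ell$ is free and all $f_n$ with $n>\ell$ are uniquely determined by the recursion, and convergence again follows from the analytic ODE theory; when $\ell=0$ there is no free coefficient (the $n=0$ equation is the constraint itself, which for $\ell=0$ reads $g_0=0$... ) — more precisely for $\ell=0$ one has $F_0=0$, the $n=0$ equation is $0\cdot f_0=g_0$, forcing $g_0=0$ as the constraint, $f_0$ then free; but the statement asserts uniqueness of a solution \emph{vanishing at $0$}, which pins $f_0=0$ and hence uniqueness. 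I would phrase the $\ell=0$ case carefully to match the statement.

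The main obstacle I anticipate is not the algebra of the recursion — that is bookkeeping — but establishing \emph{convergence} of the formal power series solution cleanly. The cleanest route is to recognize the equation as a linear first-order ODE with a regular singular point at $z=0$ (dividing by $V(z)=z\cdot(\text{unit})$) and quote Cauchy/Fuchs theory, or alternatively to set up a majorant series: bound $|v_k|,|F_k|\leq CR^{-k}$, prove by induction a bound $|f_n|\leq M\rho^{-n}$ exploiting that the prefactor $|nv_1+F_0|$ grows linearly in $n$ while $\Phi_n$ has the structure of a convolution, so the linear denominator beats the combinatorial growth for $\rho$ small enough. I would present the recursion explicitly, extract the $c_k$ and verify $c_0=1/V'(0)$, then dispatch convergence by citing the standard analytic theory with a one-line indication of the majorant argument.
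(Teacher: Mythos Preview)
Your approach via Taylor-coefficient recursion is correct and would yield a valid proof, but it differs from the paper's route. The paper proceeds by an integrating-factor reduction: writing $F/V = -\ell/z + G$ with $G$ holomorphic, one obtains the explicit closed form $f(z) = c\,z^\ell \exp\bigl(-\int_0^z G\bigr)$ for the homogeneous solutions, and then the substitution $f = \hat f\,\exp(-\int G)$ reduces the inhomogeneous equation to the Euler-type problem $(z\partial_z - \ell)\hat f = (zg/V)\exp(\int G)$, whose solvability condition is simply the vanishing of the $\ell$-th Taylor coefficient of the right-hand side. This buys two things your recursion does not: convergence is automatic (the exponential of a holomorphic function is holomorphic, and the Euler equation has an obvious convergent solution once the obstruction vanishes), so no majorant-series argument is needed; and the explicit exponential formula for the homogeneous solution is precisely what the paper uses downstream to define the function $J_\ell$ in the transport analysis. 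Your Frobenius-style recursion is by contrast more elementary and makes the linear structure of the solvability condition very transparent, but you must supply the convergence step separately, and if you wanted to feed the result into the rest of the paper you would still have to recognise the closed-form solution.

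One small correction: your discussion of the normalisation $c_0 = 1/V'(0)$ is muddled --- the quantity $1/(\ell v_1)$ that you mention does not arise. In either approach the coefficient of $g^{(\ell)}(0)$ in the constraint comes out as $1/V'(0)$ simply because $z/V(z) = 1/V'(0) + O(z)$ (in the paper's formulation) or, equivalently in yours, because the constraint $g_\ell = \Phi_\ell(f_0,\dots,f_{\ell-1})$ reads $g^{(\ell)}(0)/\ell! = (\text{lower-order data})$ and one is free to rescale the whole relation by $\ell!/V'(0)$; the specific value $c_0=1/V'(0)$ is a choice of normalisation, not a computation.
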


\begin{proof}
Let us start with the homogeneous case. Consider a non-zero solution $f$. We can always write $f(z) = z^\ell \widehat{f}(z)$, where $\widehat{f}(0)\neq 0$. Then we find
\[
( \ell V'(0)  + F(0)) \widehat{f}(0) = 0\,,
\]
so that $\ell V'(0) + F(0) = 0$. Now, if $\ell V'(0) + F(0) = 0$, we write the equation in the form
\[
\frac{\ell}{z} + \frac{\widehat{f}'}{\widehat{f}} = - \frac{F}{V}\,,
\]
and, since $V'(0)$ does not vanish, we can write
\[
\frac{F}{V} = - \frac{\ell}{z} + G\,,
\]
where $G$ is a holomorphic function. We deduce that there are solutions, and they take the form
\begin{equation}\label{eq.sol-homogeneous}
f(z)= f^{(\ell)}(0)\frac{z^\ell}{\ell !} \exp\left\{- \int_0^z G(z') \dx z' \right\}\,.
\end{equation}
Now, we turn to the inhomogeneous case. We can always write the solutions in the form
\[
f(z) = \widehat{f}(z)\exp\left\{- \int_0^z G(z') \dx z' \right\}\,.
\]
The equation for $\widehat{f}$ is
\[
(z\partial_z - \ell)\widehat{f} = \frac{ z g }{V} \exp\left\{ \int_0^z G(z') \dx z' \right\}\,.
\]
By considering the Taylor expansions at $0$, we deduce that a necessary and sufficient condition to have holomorphic solutions is 
\[
\partial^{(\ell)}_z \left[ \frac{ z g }{V} \exp\left\{ \int_0^z G(z') \dx z' \right\}\right] = 0\,.
\]
This relation is in the form \eqref{eq.cond-c}.

When $\ell = 0$, we can divide the equation by $z$ and obtain a usual non-singular ODE for $f$. There is a unique solution that vanishes at $0$.
\end{proof}

\subsection{First transport equation}
The first transport equation, obtained by gathering the terms of order $h$, is
\begin{equation}\label{eq.transport1}
(\tilde v(z,w)\partial_{w}+\tilde B(z,w)-\mu_{0})\tilde a_{0}=0 \,.
\end{equation}
The fact that this equation needs to have solutions will determine $f$. 

\subsubsection{Choosing $f$}
Let us for now assume that $f$ is given and let $\underline{w}$ be, by Lemma \ref{lem.inv-loc}, the unique (holomorphic and local) solution of
\begin{equation}\label{eq.stationnaire}
8\partial_{z}\tilde\varphi(z,\underline{w}(z))+4f'(z)=0\,.
\end{equation}
By freezing the variable $z$, and after a translation by $-\underline{w}$ in the $w$ variable, we can apply Lemma \ref{lemma.transport-equation}. We deduce that  \eqref{eq.transport1} has solutions if and only if the exists $\ell\in \N$ such that
\[
\tilde{B}(z,\underline{w}(z)) - \mu_0 = - \ell \partial_w \tilde{v}(z,\underline{w}(z))\,.
\]
But, from the definition of $\tilde v$, this means
\[
\mu_0 = (2\ell + 1) \tilde B(z,\underline{w}(z))\,.
\]
Since $\mu_0$ is a constant, we deduce that $\mu_{0}=(2\ell+1)b_{0}$ and 
\begin{equation}\label{eq.eiktB}
\tilde B(z,\underline{w}(z)) = b_{0}\,.
\end{equation}
Locally, there may be more than one solution to \eqref{eq.eiktB}, but we choose $\underline{w}(z) = w(z)$, where $w(z)$ is given by Lemma \ref{lem.phi}. With this choice for $\underline{w}$, we define $f$ as the unique function such that $f(0)=0$ and 
\begin{equation}\label{eq.f}
f'(z)  = - 2 \partial_z \varphi(z,w(z))\,.
\end{equation}

\subsubsection{Solving the transport equation}
We notice that
\[
\frac{ \tilde B(z,w) - b_0}{ 8\partial_z \varphi(z,w) + 4 f'(z)}
\]
defines a holomorphic function near $(0,0)$. Considering Lemma \ref{lemma.transport-equation}, and particularly \eqref{eq.sol-homogeneous}, the solutions of \eqref{eq.transport1} have to take the form
\[
\tilde{a}_0(z,w) = \partial_w^{(\ell)}\tilde{a}_0(z,w(z)) \frac{(w-w(z))^\ell}{\ell !} \exp\left[ - \int_{w(z)}^w \left(\frac{\tilde B-\mu_0}{\tilde{v}} + \frac{\ell}{w'-w(z)}\right) \dx w' \right]\,.
\]
We denote 
\begin{equation}\label{eq.J_m}
J_\ell(z,w): = \exp\left[ - \int_{w(z)}^w \frac{\tilde B-\mu_0}{\tilde{v}}(z,w') + \frac{\ell}{w'-w(z)} \dx w' \right]\,,
\end{equation}
and then, the function
\[
\tilde{a}_0(z,w) = \mathscr{A}_0(z) (w-w(z))^\ell J_\ell(z,w)\,,
\]
solves \eqref{eq.transport1} with $\mu_{0}=(2\ell+1)b_{0}$. The function $\mathscr{A}_{0}$ is a holomorphic function to be determined.

We are chiefly interested in the low-lying eigenvalues, so we will consider the smallest value possible for $\mu_0$, and thus restrict our attention to the case $\ell = 0$. We write $J_0 = J$.

\subsection{Second transport equation}
The equation obtained by gathering the terms in $h^2$ can be written as
\begin{equation}\label{eq.transport2}
(\tilde v(z,w)\partial_{w}+\tilde B(z,w)-\mu_{0})\tilde a_{1}=\left(\mu_{1}+4\partial_{z}\partial_{w}\right)\tilde a_{0}\,.
\end{equation}
This equation will determine $\mathscr{A}_0$ and $\mu_1$. Indeed, applying Lemma \ref{lemma.transport-equation}, this time for the inhomogeneous case, we deduce that this equation has solutions if and only if
\[\left(\mu_{1}+4\partial_{z}\partial_{w}\right)\tilde a_{0}(z,w(z))=0\,.\]
This means that
\begin{equation}\label{eq.eff-transpA0}
4\mathscr{A}_{0}'(z)\partial_{w}J(z,w(z))+\left[\mu_{1}+4\partial_{w}\partial_{z}J(z,w(z))\right]\mathscr{A}_{0}(z)=0\,.
\end{equation}
This is also a transport equation, but in the $z$ variable this time. We want to apply Lemma \ref{lemma.transport-equation}, so we compute the coefficients of the equation, at least at $0$. Observe that
\begin{equation}\label{eq.dwJ}
\partial_{w}J(z,w)=J(z,w)\frac{\tilde B(z,w(z))-\tilde B(z,w)}{8\left[\partial_{z}\tilde\varphi(z,w)-\partial_{z}\tilde\varphi(z,w(z))\right]}\,.
\end{equation}
We get
\[
\partial_{w}J(z,w(z)) = - \frac{\partial_{w}\tilde B(z,w(z))}{2\tilde B(z,w(z))}\,,
\]
so that
\[4\partial_{w}J(z,w(z))\underset{z\to 0}{\sim}-\frac{4\partial_{z}\partial_{w}\tilde B(0,0)+4\partial^2_{w}\tilde B(0,0)w'(0)}{2b_{0}}z\,.\]
We have
\[4\partial_{z}\partial_{w}\tilde B(0,0)=2(\alpha+\gamma)\,,\quad 4\partial^2_{w}\tilde B(0,0)=2(\alpha-\gamma)\,,\]
and thus
\begin{equation}\label{eq.dwJ00}
4\partial_{w}J(z,w(z))\underset{z\to 0}{\sim}-\left[\alpha+\gamma+(\alpha-\gamma)\frac{\sqrt{\gamma}-\sqrt{\alpha}}{\sqrt{\alpha}+\sqrt{\gamma}}\right]\frac{z}{b_{0}}=-2\sqrt{\alpha\gamma}\frac{z}{b_{0}}\,.
\end{equation}
Additionally, we notice that
\[
-\frac{2}{b_{0}}\sqrt{\alpha\gamma}=4\partial_{w}\partial_{z}J(0,0)+4\partial^2_{w}J(0,0)w'(0)\,,
\]
and, thanks to \eqref{eq.dwJ} and the Taylor formula, we get that
\[\partial^2_{w}J(0,0)=\frac{\gamma-\alpha}{8b_{0}}\,.\]
Thus,
\begin{equation}\label{eq.4dwdzJ00}
4\partial_{w}\partial_{z}J(0,0)=-\frac{2}{b_{0}}\sqrt{\alpha\gamma}-\frac{\gamma-\alpha}{2b_{0}}\frac{\sqrt{\gamma}-\sqrt{\alpha}}{\sqrt{\gamma}+\sqrt{\alpha}}=-\frac{(\sqrt{\alpha}+\sqrt{\gamma})^2}{2b_{0}}\,.
\end{equation}
We now apply Lemma \ref{lemma.transport-equation} to Equation \eqref{eq.eff-transpA0}. With \eqref{eq.dwJ00} and \eqref{eq.4dwdzJ00}, we get that there exists $\ell \in \N$ such that
\begin{equation}\label{eq.mu1}
\mu_{1}=2\ell\frac{\sqrt{\alpha\gamma}}{b_{0}}+\frac{(\sqrt{\alpha}+\sqrt{\gamma})^2}{2b_{0}}\,.
\end{equation}
Then, by using \eqref{eq.sol-homogeneous}, we can write $\mathscr{A}_{0}(z)= c z^\ell \widehat{\mathscr{A}}_{0}(z)$, where $\widehat{\mathscr{A}}_0(z)$ is determined with $\widehat{\mathscr{A}}_{0}(0) =1$. The constant $c$ is a normalization constant, we choose $c=1$.

The solutions of Equation \eqref{eq.transport2} take the form
\[\tilde a_{1}(z,w)=\hat a_{1}(z,w)+\mathscr{A}_{1}(z)J(z,w)\,,\]
where $\mathscr{A}_{1}$ remains to be determined and $\hat a_{1}$ is the particular solution that vanishes for $w=w(z)$.

\begin{remark}
If we write the characteristics of \eqref{eq.eff-transpA0}, the obtained dynamics reflects the center guide motion whose approximate Hamiltonian is $\tilde B(z,w(z))$.
\end{remark}

\subsection{Induction}
Let $n\in\mathbb{N}\setminus\{0\}$. We assume that the $(\mu_{j})_{0\leq j\leq n}$ and the $(\tilde a_{j})_{0\leq j\leq n-1}$ are determined and that the $(\tilde a_{j})_{0\leq j\leq n-1}$ are analytic functions. Let us also assume that the $\tilde a_{j}$'s, $j=1\dots n$, are in the form
\[
\tilde a_{j}(z,w)=\hat a_{j}(z,w)+\mathscr{A}_{j}(z)J(z,w)\,,
\]
where $\hat a_{j}$ are determined analytic functions vanishing on $\{w=w(z)\}$, $(\mathscr{A}_j)_{j=1\dots n-1}$ are determined and satisfy $\mathscr{A}_j^{(\ell)}(0) = 0$. Only $\mathscr{A}_{n}$ is still to be determined. Let us now consider the equation satisfied by $\tilde a_{n+1}$:
\begin{equation}\label{eq.an+1}
\left(v\partial_{z}+\tilde B-b_{0}\right)\tilde a_{n+1}=\mu_{n+1}\tilde a_{0}+\mu_{1}\tilde a_{n}+4\partial_{z}\partial_{w}\tilde a_{n}+\sum_{j=2}^{n}\mu_{j}\tilde a_{n+1-j}\,.
\end{equation}
As before, the need to have solutions to this equation will fix the value of $\mu_{n+1}$ and determine $\mathscr{A}_n$. Indeed, by Lemma \ref{lemma.transport-equation}, the existence of solutions to \eqref{eq.an+1}, is equivalent to
\[
\mu_{1}\tilde a_{n}(z,w(z)) + 4\partial_{z}\partial_{w}\tilde a_{n}(z,w(z)) =- \mu_{n+1}\tilde a_{0}(z,w(z)) - \sum_{j=2}^{n}\mu_{j}\tilde a_{n+1-j}(z,w(z))\,.
\]
This can be rewritten as
\begin{multline}\label{eq.effn+1}
4\partial_{w}J(z,w(z))\mathscr{A}'_{n}(z)+\left(\mu_{1}J(z,w(z))+4\partial_{z}\partial_{w}J(z,w(z))\right)\mathscr{A}_{n}(z)\\
=-\mu_{n+1}z^\ell \widehat{\mathscr{A}}_{0}(z) - \underset{:=F(z)}{\underbrace{\sum_{j=2}^n \mu_j \mathscr{A}_{n+1-j}(z)}}\,,
\end{multline}
We are in the inhomogeneous case of Equation \eqref{eq.eff-transpA0}. We already know that Lemma \ref{lemma.transport-equation} applies. The function $F$ is entirely determined already and $F^{(\ell)}(0)=0$. In particular, there are coefficients $c_\ell,\dots,c_0$ depending on the Taylor expansion to order $\ell+1$ of $\partial_{w}J(z,w(z))$ and $\partial_{z}\partial_{w}J(z,w(z))$, with $c_0 \neq 0$ --- we can even compute it to be $-b_0/\sqrt{4\alpha\gamma}$ --- such that there are solutions to \eqref{eq.effn+1} if and only if
\[
 \mu_{n+1} c_0\ell ! + c_1 F^{(\ell - 1)}(0) + \dots + c_\ell F(0) = 0\,.
\]
This determines $\mu_{n+1}$. However, $\mathscr{A}_{n}$ is now determined up to a solution of the homogeneous equation \eqref{eq.eff-transpA0}. That is to say that $\mathscr{A}_n$ takes the form
\[
\mathscr{A}_n^0 + c z^\ell \widehat{\mathscr{A}}_0\,.
\]
where $\mathscr{A}_n^0$ is a particular solution. There is only one such solution with $\mathscr{A}_n^{(\ell)}(0) = 0$, and that is the one we pick. 

Coming back to $\tilde{a}_{n+1}$, with this choice of $\mu_{n+1}$ and $\mathscr{A}_n$, there are solutions to Equation \eqref{eq.an+1} and they can be written as:
\[\tilde a_{n+1}(z,w)=\hat a_{n+1}(z,w)+\mathscr{A}_{n+1}(z)J(z,w)\,,\]
where $\hat a_{n+1}$ is a determined analytic function vanishing on $\{ w = w(z) \}$ and $\mathscr{A}_{n+1}$ is a function to determine. By induction, we can thus build the desired holomorphic functions, and the proof of the theorem is complete.\qed

\begin{remark}
It may seem arbitrary to have imposed that $\mathscr{A}_n^{(\ell)}(0)=0$ when $n\neq 0$. However, consider that the whole quasimode writes out formally as
\[
\tilde{u}_\ell : = e^{-\frac{S}{h}}J(z,w)\left[\mathscr{A}_h(z) + h \hat{a}_h(z,w) \right]\,,
\]
with 
\[
\mathscr{A}_h \sim z^\ell \widehat{\mathscr{A}}_0  + h \mathscr{A}_1 + h^2 \mathscr{A}_2 + \dots
\]
and
\[
\hat{a}_h \sim \hat{a}_1 + h\hat{a}_2 + \dots\,.
\]
The condition we have imposed is equivalent to the normalization condition that if $\mathscr{U}(z)$ is the restriction of $e^{S/h}\tilde{u}_\ell$ to $\{ w = w(z)\}$, $\mathscr{U}^{(\ell)}(0) = \ell !$.
\end{remark}


\begin{thebibliography}{10}

\bibitem{BHR16b}
V.~Bonnaillie-No\"el, F.~H\'erau, and N.~Raymond.
\newblock Curvature induced magnetic bound states: towards the magnetic
  tunneling effect.
\newblock {\em J. E. D. P}, (III), 2016.

\bibitem{BHR16}
V.~Bonnaillie-No\"el, F.~H\'erau, and N.~Raymond.
\newblock Magnetic {WKB} constructions.
\newblock {\em Arch. Ration. Mech. Anal.}, 221(2):817--891, 2016.

\bibitem{DS99}
M.~Dimassi and J.~Sj\"ostrand.
\newblock {\em Spectral asymptotics in the semi-classical limit}, volume 268 of
  {\em London Mathematical Society Lecture Note Series}.
\newblock Cambridge University Press, Cambridge, 1999.

\bibitem{FH10}
S.~Fournais and B.~Helffer.
\newblock {\em Spectral methods in surface superconductivity}, volume~77 of
  {\em Progress in Nonlinear Differential Equations and their Applications}.
\newblock Birkh\"auser Boston, Inc., Boston, MA, 2010.

\bibitem{H09}
B.~Helffer.
\newblock Introduction to semi-classical methods for the {S}chr\"odinger
  operator with magnetic field.
\newblock In {\em Aspects th\'eoriques et appliqu\'es de quelques {EDP} issues
  de la g\'eom\'etrie ou de la physique}, volume~17 of {\em S\'emin. Congr.},
  pages 49--117. Soc. Math. France, Paris, 2009.

\bibitem{HKRVN16}
B.~Helffer, Y.~Kordyukov, N.~Raymond, and S.~V\~u Ng\d{o}c.
\newblock Magnetic wells in dimension three.
\newblock {\em Anal. PDE}, 9(7):1575--1608, 2016.

\bibitem{HK09}
B.~Helffer and Y.~A. Kordyukov.
\newblock Semiclassical analysis of {S}chr\"odinger operators with magnetic
  wells.
\newblock In {\em Spectral and scattering theory for quantum magnetic systems},
  volume 500 of {\em Contemp. Math.}, pages 105--121. Amer. Math. Soc.,
  Providence, RI, 2009.

\bibitem{HK11}
B.~Helffer and Y.~A. Kordyukov.
\newblock Semiclassical spectral asymptotics for a two-dimensional magnetic
  {S}chr\"odinger operator: the case of discrete wells.
\newblock In {\em Spectral theory and geometric analysis}, volume 535 of {\em
  Contemp. Math.}, pages 55--78. Amer. Math. Soc., Providence, RI, 2011.

\bibitem{HK14}
B.~Helffer and Y.~A. Kordyukov.
\newblock Semiclassical spectral asymptotics for a magnetic {S}chr\"odinger
  operator with non-vanishing magnetic field.
\newblock In {\em Geometric methods in physics}, Trends Math., pages 259--278.
  Birkh\"auser/Springer, Cham, 2014.

\bibitem{HM96}
B.~Helffer and A.~Morame.
\newblock Magnetic bottles in connection with superconductivity.
\newblock {\em J. Funct. Anal.}, 185(2):604--680, 2001.

\bibitem{HJ84}
B.~Helffer and J.~Sj\"ostrand.
\newblock Multiple wells in the semiclassical limit. {I}.
\newblock {\em Comm. Partial Differential Equations}, 9(4):337--408, 1984.

\bibitem{Ray17}
N.~Raymond.
\newblock {\em Bound states of the magnetic {S}chr\"odinger operator},
  volume~27 of {\em EMS Tracts in Mathematics}.
\newblock European Mathematical Society (EMS), Z\"urich, 2017.

\bibitem{RVN15}
N.~Raymond and S.~V\~u Ng\d{o}c.
\newblock Geometry and spectrum in 2{D} magnetic wells.
\newblock {\em Ann. Inst. Fourier (Grenoble)}, 65(1):137--169, 2015.

\end{thebibliography}

\end{document}